\newcommand{\qed}{$\;\;\;\Box$}
\newenvironment{proof}{\par\smallbreak{\sl\bf Proof.~}}
{\unskip\nobreak\hfill \qed \par\medbreak}
\newcounter{claim}
\renewcommand{\theclaim}{\arabic{claim}}
{\par\medskip\par}
\newcommand{\N}{{\mathbb N}}
\newcommand{\R}{{\mathbb R}}
  \newcommand{\Z}{{\mathbb Z}}
\newcommand{\LL}{{\cal L}}
\newcommand{\beq}{\begin{equation}}
\newcommand{\ee}{\end{equation}}
\renewcommand{\d}{\partial}
\newtheorem{thm}{Theorem}[section]
\newtheorem{lem}[thm]{Lemma}
\newtheorem{cor}[thm]{Corollary}
\newtheorem{rem}[thm]{Remark}
\newcommand{\al}{\alpha}
\newcommand{\be}{\beta}
\newcommand{\ga}{\gamma}
\newcommand{\vphi}{\varphi}
\newcommand{\om}{\omega}
\title{Fredholm Solvability of Periodic Neumann Problem for a Linear Telegraph Equation
} 
\newcounter{thesame}
\author{
I.~Kmit\thanks{Supported by a Humboldt Research Fellowship.} \\
{\small
Institute of Mathematics, Humboldt University of Berlin,}
\\
{\small Rudower Chaussee 25, D-12489 Berlin, Germany }
\\
{\small
and Institute for Applied Problems of Mechanics and Mathematics, }
\\
{\small
Ukrainian Academy of Sciences,  Naukova St.\ 3b, 79060 Lviv,
Ukraine 
}
\\
{\small   E-mail:
{\tt kmit@informatik.hu-berlin.de}}
}
\begin{document}

\maketitle

\begin{abstract}
\noindent
We investigate  the linear telegraph equation 
$$
u_{tt}-u_{xx}+2\mu u_t=f(x,t)
$$
with periodic Neumann boundary conditions. We prove that the operator of the problem is
modeled as a Fredholm operator of index zero in the scale of Sobolev-type spaces of periodic functions.
This result extends to small perturbations of the equation where $\mu$ becomes variable and discontinuous or an
additional zero-order term appears. We also show that the solutions to the problem are smoothing.
\end{abstract}

\emph{Key words:} telegraph equation, correlated random walk system, periodic problems, Neumann boundary conditions, 
Fredholm solvability, smoothing solutions

\emph{Mathematics Subject Classification: 35L20, 35B10}

\section{Introduction}\label{sec:intr} 

The {\it telegraph equation} 
\beq\label{eq:nonlinear}
u_{tt}-u_{xx}+2\mu u_t+F(x,t,u)=0,\quad (x,t)\in(0,1)\times\R
\ee
combines features of  diffusion and wave equations. It describes dissipative wave processes, e.g., 
in  transmission and propagation of  electrical signals, dynamical processes in biological populations, 
economical and ecological systems etc. (see~\cite{Horst,Mark} and references therein). Hillen~\cite{Hil_Hopf}
discusses the appearance of Hopf bifurcations for (\ref{eq:nonlinear}) with Neumann boundary conditions in the case when $\mu$
is a negative constant. An important step towards a rigorous bifurcation analysis 
(via the Implicit Function Theorem and the Lyapunov-Schmidt reduction~\cite{ChowHale,Ki}) is to 
establish the Fredholm solvability of a linearized problem.
Note in this respect that the Fredholm property for hyperbolic PDEs is much less studied than for ODEs and parabolic PDEs.

We will investigate a linearization of (\ref{eq:nonlinear}) known as the {\it damped wave equation}:
\beq\label{eq:tel}
u_{tt}-u_{xx}+2\mu u_t=f(x,t),\quad (x,t)\in(0,1)\times\R,
\ee
where $\mu$ is a constant. This equation describes a correlated random walk  under the assumption that
particles with density $u$ have a constant speed and a constant turning rate $\mu$. Furthermore, we impose 
 time-periodicity conditions
\beq\label{eq:per}
\begin{array}{l}
\displaystyle
u\left(x,t+\frac{2\pi}{\om}\right) = u(x,t), \quad (x,t)\in[0,1]\times\R\\
\displaystyle
u_t\left(x,t+\frac{2\pi}{\om}\right) = u_t(x,t), \quad (x,t)\in[0,1]\times\R,
\displaystyle
\end{array}
\ee
where $\om>0$ is a particle speed, and Neumann boundary conditions
\beq\label{eq:Neu}
u_x(0,t) = u_x(1,t)=0, \quad t\in\R.
\ee
Solvability of periodic problems for the telegraph equation  (\ref{eq:tel}) is investigated, in particular, 
in~\cite{Mawhin_per-Neu,Maw_per2,Hil_Hopf,Kim,Kim1,Maw_per1}. Our main result (Theorem~\ref{thm:fredh_tel})
is the Fredholm Alternative  for the problem (\ref{eq:tel})--(\ref{eq:Neu}). Previously, the Fredholm zero index property 
was known only in the double-periodic case~\cite{Kim,Kim1}.
We prove our result by reduction to a related periodic Neumann problem for a hyperbolic system. 
When splitting $u=v+w$ into the density $v$ of particles moving  right and the density
$w$ of particles moving left,  (\ref{eq:tel})--(\ref{eq:Neu}) is brought into the form:
\beq\label{eq:walk1}
\begin{array}{l}
v_t+v_x=g(x,t)+\mu(w-v),\quad (x,t)\in(0,1)\times\R\\
\displaystyle
w_t-w_x=g(x,t)+\mu(v-w),\quad (x,t)\in(0,1)\times\R,
\end{array}
\ee
\beq\label{eq:per1}
\begin{array}{l}
\displaystyle
v\left(x,t+\frac{2\pi}{\om}\right) = v(x,t), \quad (x,t)\in[0,1]\times\R\\%[3mm]
\displaystyle
w\left(x,t+\frac{2\pi}{\om}\right) = w(x,t), \quad (x,t)\in[0,1]\times\R,
\end{array}
\ee
\beq\label{eq:Neu1}
\begin{array}{l}
v(0,t) = w(0,t), \quad t\in\R
\\
\displaystyle
v(1,t) = w(1,t), \quad t\in\R.
\end{array}
\ee
System (\ref{eq:walk1}) describes a random walk process introduced by Taylor \cite{Taylor}, where
a particle moving right dies with rate $\mu$ and is reborn as a particle moving left
with the same rate. Homogeneous Neumann boundary  conditions (\ref{eq:Neu1}) describe reflection of particles 
from the boundary.

For (\ref{eq:walk1})--(\ref{eq:Neu1}) we proved a Fredholm Alternative in~\cite{KmRe1}. Now, 
we show the equivalence between the models  (\ref{eq:tel})--(\ref{eq:Neu}) and (\ref{eq:walk1})--(\ref{eq:Neu1})
in a certain functional analytic sense, which allows us to derive the Fredholm property for 
 (\ref{eq:tel})--(\ref{eq:Neu}) from the result about (\ref{eq:walk1})--(\ref{eq:Neu1}) obtained in~\cite{KmRe1}.

By a perturbation argument, our results extend to the equation
\beq\label{eq:tel_perturb}
u_{tt}-u_{xx}+\nu(x,t)u_t+\al(x,t)u=f(x,t),\quad (x,t)\in(0,1)\times\R,
\ee
where $\nu(x,t)-2\mu$ and $\al(x,t)$ are sufficiently small in appropriate function spaces (see Remark~\ref{rem:perturb}).

In Section~\ref{sec:spaces} we introduce function spaces of solutions and give account of their useful properties.
Equivalence of the models (\ref{eq:tel})--(\ref{eq:Neu}) and (\ref{eq:walk1})--(\ref{eq:Neu1}) is proved 
in~Section~\ref{sec:equiv}. Finally, in Section~\ref{sec:fredh} we prove our main result (Theorem~\ref{thm:fredh_tel}).

\section{Function spaces and operators}~\label{sec:spaces}

We will use yet  another representation of the problem
(\ref{eq:walk1})--(\ref{eq:Neu1}), namely,
\beq\label{eq:walk2}
\begin{array}{l}
u_t+z_x=g(x,t),\quad (x,t)\in(0,1)\times\R\\
\displaystyle
z_t+u_x=-2\mu z,\quad (x,t)\in(0,1)\times\R,
\end{array}
\ee
\beq\label{eq:per2}
\begin{array}{l}
\displaystyle
u\left(x,t+\frac{2\pi}{\om}\right) = u(x,t), \quad (x,t)\in[0,1]\times\R\\
\displaystyle
z\left(x,t+\frac{2\pi}{\om}\right) = z(x,t), \quad (x,t)\in[0,1]\times\R,
\end{array}
\ee
\beq\label{eq:Neu2}
z(0,t) = z(1,t)=0, \quad t\in\R,
\ee
where
\beq\label{eq:uv}
u=\frac{v+w}{2},\qquad z=\frac{v-w}{2}.
\ee
Here $2u$ stands for the total particle density, while $2z$  for particle flux.
The relationship between the models (\ref{eq:tel})--(\ref{eq:Neu}), (\ref{eq:walk1})--(\ref{eq:Neu1}), and (\ref{eq:walk2})--(\ref{eq:Neu2})
is discussed in Section~\ref{sec:equiv}. We will show that they are equivalent in a certain sense.

For solutions and right hand sides of the problems (\ref{eq:tel})--(\ref{eq:Neu}), (\ref{eq:walk1})--(\ref{eq:Neu1}), and (\ref{eq:walk2})--(\ref{eq:Neu2})
we now introduce   pairs of spaces $(U_b^\ga,H^{0,\ga-1})$,
$(V_b^\ga,W_d^\ga)$, and $(Z_b^\ga,W_0^\ga)$, respectively. Here $\ga\ge 1$ denotes a real scaling parameter.
The subscript $b$ indicates that we construct spaces constrained by boundary conditions, $d$
stands for the diagonal subspace of pairs $(u,u)$, and $0$ for the subspace of pairs $(u,0)$.

Given $l\in\N_0$ and $\ga\ge 0$, we first introduce the space $H^{l,\ga}$   of all
measurable functions $u: (0,1)\times\R\to\R$ such that
$$
u(x,t)=u\left(x,t+\frac{2\pi}{\om}\right) \,\,\mbox{for\,\,a.a. }\,
\,(x,t)\in[0,1]\times\R
$$
and 
\begin{equation}\label{eq:1.3}
\|u\|_{H^{l,\ga}}^2:=\sum\limits_{k\in\Z}(1+k^2)^{\gamma}
\sum\limits_{m=0}^l\int\limits_0^1\left|\int\limits_0^{\frac{2\pi}{\om}}
\partial_x^mu(x,t)e^{-ik\om t}\,dt\right|^2\,dx<\infty.\nonumber
\end{equation}
It is well-known (see, e.g., \cite[Chapter 2.4]{vejvoda})
that $H^{l,\ga}$ is a Banach space.
In fact, this is the space of all $\frac{2 \pi}{\om}$-periodic maps $u: \R \to H^l(0,1)$ that are 
locally $L^2$-Bochner
integrable together with their generalized derivatives up to the (possibly noninteger) order $\ga$.
Furthermore, we define
\begin{eqnarray*}
W^{\gamma}&=&H^{0,\ga}\times H^{0,\ga},\\
W_d^{\gamma}&=&\left\{(g,f)\in W^{\gamma}:\,g=f\right\},\\
W_0^{\gamma}&=&\left\{(g,f)\in W^{\gamma}:\,f=0\right\},\\
V^{\gamma}&=& \left\{(v,w)\in W^{\gamma}:\,
(v_t+v_x,w_t-w_x)\in W^{\gamma}\right\},\\
Z^{\gamma}&=& \left\{(u,z)\in W^{\gamma}:\,
(u_t+z_x,z_t+u_x)\in W^{\gamma}\right\},\\
U^{\gamma}&=& \left\{u\in H^{0,\gamma}:\,
u_x\in H^{0,\gamma-1},u_{tt}-u_{xx}\in H^{0,\gamma-1}\right\},
\end{eqnarray*}
where $u_t$, $u_x$, $u_{tt}$,  $u_{xx}$, $v_t$,  $v_x$, $z_t$, and  $z_x$ are understood in the sense
of generalized derivatives.
The function spaces $W^{\ga}$,  $V^{\gamma}$, $Z^{\gamma}$, and $U^{\gamma}$ will be endowed with the norms
\begin{eqnarray*}
\|(g,f)\|_{W^{\gamma}}^2&=&\|g\|_{H^{0,\ga}}^2+\|f\|_{H^{0,\ga}}^2,
\\
\|(v,w)\|_{V^{\gamma}}^2&=&\|(v,w)\|_{W^{\gamma}}^2
+\|(v_t+v_x,w_t-w_x)\|_{W^{\gamma}}^2,
\\
\|(u,z)\|_{Z^{\gamma}}^2&=&\|(u,z)\|_{W^{\gamma}}^2
+\|(u_t+z_x,z_t+u_x)\|_{W^{\gamma}}^2,\\
\|u\|_{U^{\gamma}}^2&=&\|u\|_{H^{0,\gamma}}^2+\|u_x\|_{H^{0,\gamma-1}}^2
+\|u_{tt}-u_{xx}\|_{H^{0,\gamma-1}}^2.
\end{eqnarray*}

In the following two lemmas we collect some useful properties of the function spaces $V^\ga$ and $U^\ga$, 
respectively.

\begin{lem} \cite[Section~2]{KmRe1}\label{lem:V}

(i) The space $V^{\gamma}$ is complete.

(ii) If $\gamma\ge 1$, then  $V^{\gamma}$ is continuously embedded into $\left(H^{1,\ga-1}\right)^2$.

(iii)  For any $x \in [0,1]$ there exists a continuous trace map
$$
(v,w) \in V^\ga \mapsto \Bigl(v(x,\cdot),w(x,\cdot)\Bigr)
\in \left(L^2\left(0,\frac{2\pi}{\om}\right)\right)^2.
$$
\end{lem}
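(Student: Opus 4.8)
The plan is to pass to Fourier series in the time variable and thereby reduce all three assertions to weighted families of one-dimensional statements in $x$. For $v\in H^{0,\ga}$ set $\hat v_k(x)=\int_0^{2\pi/\om}v(x,t)e^{-ik\om t}\,dt$, so that by definition $\|v\|_{H^{0,\ga}}^2=\sum_{k\in\Z}(1+k^2)^\ga\int_0^1|\hat v_k(x)|^2\,dx$, and similarly with the extra term $|\hat v_k'(x)|^2$ for $\|v\|_{H^{1,\ga-1}}^2$, where $'=\d_x$. Since $\d_x$ commutes with the Fourier transform in $t$ and $\d_t$ becomes multiplication by $ik\om$, the characteristic combinations satisfy $\widehat{(v_t+v_x)}_k=ik\om\,\hat v_k+\hat v_k'$ and $\widehat{(w_t-w_x)}_k=ik\om\,\hat w_k-\hat w_k'$. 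All three parts will follow by manipulating these relations coefficientwise and resumming.

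For (i) I would take a Cauchy sequence $(v_n,w_n)$ in $V^\ga$. By construction it is Cauchy in $W^\ga=(H^{0,\ga})^2$ and its image $(\d_t v_n+\d_x v_n,\d_t w_n-\d_x w_n)$ is Cauchy in $W^\ga$; since $H^{0,\ga}$ is complete, both converge, say to $(v,w)$ and to $(p,q)$. It remains only to identify the limits: convergence in $H^{0,\ga}$ entails convergence in the sense of distributions, on which differentiation acts continuously, so $\d_t v_n+\d_x v_n$ tends both to $\d_t v+\d_x v$ and to $p$, forcing $\d_t v+\d_x v=p$, and likewise $\d_t w-\d_x w=q$. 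Hence $(v,w)\in V^\ga$ with $(v_n,w_n)\to(v,w)$ in $V^\ga$.

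For (ii) the heart of the matter is to recover one full spatial derivative from control of a \emph{single} characteristic combination. From $\hat v_k'=\widehat{(v_t+v_x)}_k-ik\om\,\hat v_k$ I obtain $|\hat v_k'(x)|^2\le 2|\widehat{(v_t+v_x)}_k(x)|^2+2\om^2k^2|\hat v_k(x)|^2$; multiplying by $(1+k^2)^{\ga-1}$, integrating in $x$, summing in $k$, and using $(1+k^2)^{\ga-1}\le(1+k^2)^\ga$ together with $(1+k^2)^{\ga-1}k^2\le(1+k^2)^\ga$ gives $\|\d_x v\|_{H^{0,\ga-1}}^2\le C\bigl(\|v\|_{H^{0,\ga}}^2+\|v_t+v_x\|_{H^{0,\ga}}^2\bigr)$, the zeroth-order part being even simpler. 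The identical computation applies to $w$ with $w_t-w_x$, and summing yields $\|(v,w)\|_{(H^{1,\ga-1})^2}\le C\|(v,w)\|_{V^\ga}$. The hypothesis $\ga\ge1$ enters precisely to ensure $\ga-1\ge0$, so that $H^{1,\ga-1}$ is a legitimate member of the scale.

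For (iii) I would feed (ii) into the one-dimensional trace inequality. Each $\hat v_k$ now lies in $H^1(0,1)$, so $|\hat v_k(x)|^2\le C\|\hat v_k\|_{H^1(0,1)}^2$ uniformly in $x\in[0,1]$; since $\ga\ge1$ gives $(1+k^2)^{\ga-1}\ge1$, summing over $k$ and applying Parseval in $t$ produces
$$
\|v(x,\cdot)\|_{L^2(0,2\pi/\om)}^2=c\sum_{k\in\Z}|\hat v_k(x)|^2\le C\sum_{k\in\Z}(1+k^2)^{\ga-1}\|\hat v_k\|_{H^1(0,1)}^2=C\|v\|_{H^{1,\ga-1}}^2,
$$
which by (ii) is bounded by $C\|(v,w)\|_{V^\ga}^2$ uniformly in $x$, and the same holds for $w$. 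I expect (ii) to be the main obstacle: one controls only the combination $v_t+v_x$ rather than $v_t$ and $v_x$ individually, and the argument works only because trading a time frequency for a spatial derivative costs exactly one power of the weight $(1+k^2)$, which is what forces the second index to drop from $\ga$ to $\ga-1$. A subsidiary care-point is to justify that the coefficientwise relations faithfully encode the generalized derivatives and that the formal Fourier series genuinely represents the $L^2(dt)$ trace used in (iii).
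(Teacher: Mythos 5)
Your argument is correct, but note first that the paper does not actually prove Lemma~\ref{lem:V}: it is quoted from \cite[Section~2]{KmRe1}, so the only in-paper point of comparison is the proof of the analogous Lemma~\ref{lem:U} for $U^\gamma$. For parts (i) and (ii) your route coincides in substance with that one: the completeness proof is the same Cauchy-sequence-plus-distributional-identification argument, and your Fourier-coefficient estimate for (ii) is just the multiplier form of the direct inequality $\|\d_t v\|_{H^{0,\ga-1}}\le\|v\|_{H^{0,\ga}}$ followed by writing $v_x=(v_t+v_x)-v_t$; both hinge on trading one time frequency for one power of the weight $(1+k^2)$, exactly as the paper's estimate for $U^\gamma$ trades two. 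Where you genuinely diverge is (iii): you derive the trace bound from the embedding (ii) together with the one-dimensional Sobolev inequality $\sup_x|\hat v_k(x)|^2\le C\|\hat v_k\|_{H^1(0,1)}^2$ and the crude bound $1\le(1+k^2)^{\ga-1}$. This works, but only for $\ga\ge1$, since it routes through (ii). The standard proof (the one behind \cite{KmRe1}) instead integrates $\frac{d}{d\xi}\,|v(x+\xi,t+\xi)|^2=2v\,(v_t+v_x)$ along characteristics, which yields the trace for every $\ga\ge0$ using only the combination $v_t+v_x\in H^{0,\ga}$ and no spatial regularity of $v$ at all; since the statement of (iii), unlike (ii), carries no restriction on $\ga$, your mechanism is strictly weaker, though it suffices under the paper's ambient convention $\ga\ge1$ and hence for everything the lemma is used for here. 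The two well-definedness points you flag at the end (that the coefficientwise identities faithfully encode the generalized derivatives, and that $\sum_k\hat v_k(x)e^{ik\om t}$ really is the $L^2(dt)$ trace, legitimate because each $\hat v_k$ has a continuous representative on $[0,1]$) are indeed where a careful write-up needs a line each, but both are routine.
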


Similar properties are encountered in the function spaces $U^\gamma$.

\begin{lem} \label{lem:U}

(i) The space $U^{\gamma}$ is complete.

(ii) If $\gamma\ge 2$, then  $U^{\gamma}$ is continuously embedded into $H^{2,\ga-2}$.

(iii)  If $\ga\ge 1$, then for any $x \in [0,1]$ there exists a continuous trace map
$$
u \in U^\ga \mapsto u_x(x,\cdot)
\in L^2\left(0,\frac{2\pi}{\om}\right).
$$
\end{lem}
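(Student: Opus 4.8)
The plan is to pass to the time-Fourier representation and reduce all three assertions to weighted estimates on the coefficient functions. For $u\in H^{0,\ga}$ write $u_k(x)=\int_0^{2\pi/\om}u(x,t)e^{-ik\om t}\,dt$, so that, by the very definition of the norm, $\|u\|_{H^{l,\ga}}^2$ coincides with $\sum_{k\in\Z}(1+k^2)^{\ga}\|u_k\|_{H^l(0,1)}^2$. Differentiation in $t$ becomes multiplication by $ik\om$, hence the coefficient of $u_{tt}-u_{xx}$ is $f_k:=-k^2\om^2u_k-u_k''$ and $u_k''=-k^2\om^2u_k-f_k$. In these terms, membership $u\in U^\ga$ is equivalent to the three conditions $\sum_k(1+k^2)^{\ga}\|u_k\|_{L^2}^2<\infty$, $\sum_k(1+k^2)^{\ga-1}\|u_k'\|_{L^2}^2<\infty$, $\sum_k(1+k^2)^{\ga-1}\|f_k\|_{L^2}^2<\infty$, and $\|u\|_{U^\ga}^2$ is exactly the sum of these three quantities.

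For (i) I would run the standard graph-norm argument. Given a Cauchy sequence $(u_n)$ in $U^\ga$, the three component sequences $u_n$, $\partial_x u_n$, $(\partial_{tt}-\partial_{xx})u_n$ are Cauchy in $H^{0,\ga}$, $H^{0,\ga-1}$, $H^{0,\ga-1}$ respectively, and each of these spaces is complete, so they converge to some $u$, $p$, $q$. Since for $\ga\ge1$ these spaces embed continuously into $L^2$ and hence into the periodic distributions, and since $\partial_x$ and $\partial_{tt}-\partial_{xx}$ are continuous on distributions, uniqueness of distributional limits forces $p=u_x$ and $q=u_{tt}-u_{xx}$; thus $u\in U^\ga$ and $u_n\to u$ in $U^\ga$. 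For (ii), with $\ga\ge2$ so that $\ga-2\ge0$, I only need the triangle inequality $\|u_k''\|_{L^2}\le\|f_k\|_{L^2}+k^2\om^2\|u_k\|_{L^2}$ together with the elementary weight bounds $(1+k^2)^{\ga-2}\le(1+k^2)^{\ga-1}$ and $k^4(1+k^2)^{\ga-2}\le(1+k^2)^{\ga}$; summing $(1+k^2)^{\ga-2}(\|u_k\|^2+\|u_k'\|^2+\|u_k''\|^2)$ then produces exactly the three terms of $\|u\|_{U^\ga}^2$, giving the continuous embedding $U^\ga\hookrightarrow H^{2,\ga-2}$.

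Part (iii) is where the real work lies, and the naive route fails: bounding $\|u_k''\|$ as in (ii) would force a term $\sum_k k^4\|u_k\|^2$, controlled only for $\ga\ge2$, whereas here $\ga\ge1$. The remedy is an energy estimate for the fibre equation $u_k''+k^2\om^2u_k=-f_k$. Setting $E_k(x)=|u_k'(x)|^2+k^2\om^2|u_k(x)|^2$, a direct computation using the equation cancels the cross terms and yields $E_k'(x)=-2\Re\bigl(f_k(x)\overline{u_k'(x)}\bigr)$, so $|E_k'|\le|f_k|^2+|u_k'|^2$. Integrating and averaging the identity $E_k(x)=E_k(y)+\int_y^x E_k'$ over $y\in[0,1]$ gives the uniform-in-$x$ bound $|u_k'(x)|^2\le E_k(x)\le2\|u_k'\|_{L^2}^2+\om^2k^2\|u_k\|_{L^2}^2+\|f_k\|_{L^2}^2$. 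Crucially only $k^2\|u_k\|^2$ appears, and for $\ga\ge1$ one has $k^2\le(1+k^2)^{\ga}$ and $1\le(1+k^2)^{\ga-1}$, so summing over $k$ and using that $\|u_x(x,\cdot)\|_{L^2(0,2\pi/\om)}^2$ is a constant multiple of $\sum_k|u_k'(x)|^2$ gives $\|u_x(x,\cdot)\|_{L^2(0,2\pi/\om)}^2\le C\|u\|_{U^\ga}^2$ with $C$ independent of $x$. Since each $u_k$ lies in $H^2(0,1)$ (as $u_k''=-k^2\om^2u_k-f_k\in L^2$), its derivative $u_k'$ has a continuous representative on $[0,1]$, the partial sums $\sum_{|k|\le N}u_k'(x)e^{ik\om t}$ are Cauchy in $L^2$ uniformly in $x$, and their limit defines the trace $u_x(x,\cdot)$; the displayed bound makes this trace map linear and continuous. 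I expect this energy-estimate step — finding the combination $|u_k'|^2+k^2\om^2|u_k|^2$ whose $x$-derivative avoids the spurious $k^4$ growth — to be the main obstacle, the rest being routine Fourier bookkeeping.
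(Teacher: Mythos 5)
Your proof is correct, and parts (i) and (ii) essentially coincide with the paper's argument: (i) is the same graph-norm completeness proof (the paper identifies the limits by testing against compactly supported smooth functions, you by continuity of $\d_x$ and $\d_t^2-\d_x^2$ on distributions), and (ii) is the paper's estimate $\d_x^2u=\d_t^2u-(\d_t^2u-\d_x^2u)\in H^{0,\ga-2}$ rewritten fibre-wise as $\|u_k''\|\le\|f_k\|+k^2\om^2\|u_k\|$ with the weight inequalities $k^4(1+k^2)^{\ga-2}\le(1+k^2)^{\ga}$ and $(1+k^2)^{\ga-2}\le(1+k^2)^{\ga-1}$. The genuine divergence is in (iii): the paper disposes of it with the single sentence ``Claim (iii) follows from the definition of $U^\ga$,'' whereas you supply an actual argument, namely the energy identity $E_k'=-2\Re\bigl(f_k\overline{u_k'}\bigr)$ for $E_k=|u_k'|^2+k^2\om^2|u_k|^2$ on each Fourier fibre $u_k''+k^2\om^2u_k=-f_k$, followed by averaging in the base point. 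This is a real contribution rather than mere bookkeeping: a soft trace argument from the raw memberships $u_x\in H^{0,\ga-1}$, $u_{xx}\in H^{0,\ga-2}$ only yields $u_x\in C\bigl([0,1];H^{\ga-3/2}_t\bigr)$, and a Gagliardo--Nirenberg bound $|u_k'(x)|^2\le C\|u_k'\|(\|u_k'\|+\|u_k''\|)$ forces $\ga\ge 3/2$; your cancellation of the cross terms is exactly what exploits the hyperbolic structure to reach the stated range $\ga\ge1$ with only $k^2\|u_k\|^2$ appearing. In short, your write-up proves a statement the paper merely asserts, at the cost of being longer; the paper's version buys brevity by leaning on the reader to reconstruct precisely this estimate.
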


\begin{proof}
{\bf (i)}
Let $(u_{j})$ be a fundamental sequence
in $U^{\gamma}$.
Then $(u_j)$ is fundamental in $H^{0,\ga}$ and $(\d_xu_j)$ and $(\d_t^2u_j-\d_x^2u_j)$
are fundamental in $H^{0,\gamma-1}$. Since $H^{0,\gamma}$ is complete,
there exist
$u\in H^{0,\gamma}$ and $v,w\in H^{0,\gamma-1}$ such that
$$
u_{j}\to u \mbox{ in } H^{0,\ga},\quad
\d_xu_j\to  v  \mbox{ in } H^{0,\ga-1},
 \mbox{ and } \quad
\d_t^2u_j-\d_x^2u_j\to w  \mbox{ in } H^{0,\ga-1}
$$
 as $j\to\infty$.
It remains to show that
$
\d_xu=v
$
and
$
\d_t^2u-\d_x^2u=w
$
in the sense of generalized derivatives. For this purpose take a smooth
function
$\vphi: (0,1)\times\left(0,\frac{2\pi}{\om}\right)\to\R$ with compact
support and note that
\begin{eqnarray*}
&\displaystyle
-\int\limits_{0}^{\frac{2\pi}{\om}}\int\limits_{0}^1u\d_x\vphi\,
dx\,dt=
-\lim\limits_{j\to\infty}\int\limits_{0}^{\frac{2\pi}{\om}}\int\limits_{
0}^1
u_j\d_x\vphi\,dx\,dt&\\
&\displaystyle
=\lim\limits_{j\to\infty}\int\limits_{0}^{\frac{2\pi}{\om}}\int\limits_{
0}^1
\d_xu_j\vphi\,dx\,dt=
\int\limits_{0}^{\frac{2\pi}{\om}}\int\limits_{0}^1
v\vphi\,dx\,dt.&
\end{eqnarray*}
Similarly, 
\begin{eqnarray*}
&\displaystyle
\int\limits_{0}^{\frac{2\pi}{\om}}\int\limits_{0}^1u(\d_t^2-\d_x^2)\vphi\,
dx\,dt=
\lim\limits_{j\to\infty}\int\limits_{0}^{\frac{2\pi}{\om}}\int\limits_{
0}^1
u_j(\d_t^2-\d_x^2)\vphi\,dx\,dt&\\
&\displaystyle
=\lim\limits_{j\to\infty}\int\limits_{0}^{\frac{2\pi}{\om}}\int\limits_{
0}^1
(\d_t^2-\d_x^2)u_j\vphi\,dx\,dt=
\int\limits_{0}^{\frac{2\pi}{\om}}\int\limits_{0}^1
w\vphi\,dx\,dt.&
\end{eqnarray*}

{\bf (ii)} 
Take $u\in U^{\ga}$. Then $u\in H^{0,\ga}$ and,
hence, $\d_t^2u\in H^{0,\ga-2}$.
By the definition of the space $U^{\ga}$, we have $\d_x^2u\in H^{0,\ga-2}$ as well.
Therefore, $u\in H^{2,\ga-2}$. Moreover, we have
\begin{eqnarray*}
\lefteqn{
\|u\|_{H^{2,\ga-2}}^2=\|u\|_{H^{0,\ga-2}}^2+\|\d_xu\|_{H^{0,\ga-2}}^2+
\|\d_x^2u\|_{H^{0,\ga-2}}^2}\\
&&\le
\|u\|_{H^{0,\ga-2}}^2+c\|\d_xu\|_{H^{0,\ga-1}}^2+
c\|\d_x^2u\|_{H^{0,\ga-2}}^2 \\
&&\le
\|u\|_{H^{0,\ga-2}}^2+c\|\d_xu\|_{H^{0,\ga-1}}^2+
c\|\d_x^2u-\d_t^2u\|_{H^{0,\ga-2}}^2
+c\|\d_t^2u\|_{H^{0,\ga-2}}^2\le C\|u\|^2_{U^{\gamma}},
\end{eqnarray*}
where the constants $c$ and $C$ do not depend on $u$.

Claim {\bf (iii)} follows from the definition of $U^\ga$.
\end{proof}

\begin{rem}\label{rem:trace}
By  (\ref{eq:uv}) and Lemma~\ref{lem:V} (iii), for any $x \in [0,1]$ there exists a continuous trace map
$$
(u,z) \in Z^\ga \mapsto \Bigl(u(x,\cdot),z(x,\cdot)\Bigr)
\in \left(L^2\left(0,\frac{2\pi}{\om}\right)\right)^2.
$$
\end{rem}

Lemmas \ref{lem:V} (iii) and  \ref{lem:U} (iii) and Remark~\ref{rem:trace} motivate
consideration of the following closed subspaces in $V^\ga$, $Z^\ga$, and $U^\ga$:
\begin{eqnarray*}
V_b^\ga&=&\{(v,w) \in V^\ga: \; (\ref{eq:Neu1}) \mbox{ is fulfilled for a.a. } t \in \R\},\\
Z_b^\ga&=&\{(u,z) \in Z^\ga: \; (\ref{eq:Neu2}) \mbox{ is fulfilled for a.a. } t \in \R\},\\
U_b^\ga&=&\{u \in U^\ga: \; (\ref{eq:Neu}) \mbox{ is fulfilled for a.a. } t \in \R\}.
\end{eqnarray*}
Finally, we introduce linear operators
$L_{WS},\tilde L_{WS}\in\LL(V_b^{\gamma};W_d^{\gamma})$ by
$$
L_{WS}
\left[
\begin{array}{c}
v\\w
\end{array}
\right]
=
\left[
\begin{array}{c}
v_t+v_x-\mu(w-v)\\w_t-w_x-\mu(v-w)
\end{array}
\right], 
$$
$$
\tilde{L}_{WS}
\left[
\begin{array}{c}
v\\w
\end{array}
\right]
=
\left[
\begin{array}{c}
-v_t-v_x-\mu(w-v)\\
-w_t+w_x-\mu(v-w)
\end{array}
\right],
$$
a linear operator $L_{WS}^\prime\in\LL(Z_b^{\gamma};W_0^{\gamma})$ by
$$
L_{WS}^\prime
\left[
\begin{array}{c}
u\\z
\end{array}
\right]
=
\left[
\begin{array}{c}
u_t+z_x\\z_t+u_x+2\mu z
\end{array}
\right], 
$$
and  linear operators $L_{TE},\tilde L_{TE}\in\LL(U_b^{\gamma};H^{0,\gamma-1})$ by
$$
L_{TE}(u)
=
u_{tt}-u_{xx}+2\mu u_t, 
$$
$$
\tilde L_{TE}(u)
=
u_{tt}-u_{xx}-2\mu u_t.
$$

\section{Equivalence of the models}\label{sec:equiv}

We first describe the reduction of the problem (\ref{eq:walk1})--(\ref{eq:Neu1}) to
(\ref{eq:tel})--(\ref{eq:Neu}) that was suggested in \cite{Kac}, see also \cite{Hillen}. 
Recall that the simple change of variables (\ref{eq:uv})
 transforms the system (\ref{eq:walk1})--(\ref{eq:Neu1}) to the form (\ref{eq:walk2})--(\ref{eq:Neu2}).
Now, assuming two-times differentiability of $z$ and $w$, we differentiate the first equation in
(\ref{eq:walk2}) with respect to $t$ and the second equation with respect to $x$. After a simple calculation 
we come to the problem (\ref{eq:tel})--(\ref{eq:Neu}) with $f=g_t+2\mu g$.

Formally, we will show that the two problems (\ref{eq:tel})--(\ref{eq:Neu}) and (\ref{eq:walk1})--(\ref{eq:Neu1}) 
are equivalent in the following sense: there exist isomorphisms $\al_1: V_b^\ga\to Z_b^\ga$,
$\al_2: Z_b^\ga\to U_b^\ga$ and $\be_1: W_d^\ga\to W_0^\ga$, $\be_2: W_0^\ga\to H^{0,\ga-1}$ between 
the respective linear spaces such that the diagram
\beq\label{eq:diag}
\begin{array}{ccc} V_b^\ga & \stackrel{L_{WS}}{\longrightarrow} & W_d^\ga \\
\al_1\Big\downarrow       &                      & \Big\downarrow\be_1\\
Z_b^\ga & \stackrel{L_{WS}^\prime}{\longrightarrow} & W_0^\ga \\
\al_2\Big\downarrow       &                      & \Big\downarrow\be_2\\
U_b^\ga    & \stackrel{L_{TE}}{\longrightarrow} & H^{0,\ga-1}  
     \end{array} 
\ee
is commutative, that is,
\beq\label{eq:comm}
\be_1\circ L_{WS}=L_{WS}^\prime\circ\al_1,\quad \be_2\circ L_{WS}^\prime=L_{TE}\circ\al_2.
\ee

Specifically, we define $\al_1, \be_1, \al_2$, and $\be_2$ by 
\beq\label{eq:ab1}
\al_1(v,w)=\left(\frac{v+w}{2},\frac{v-w}{2}\right),\quad \be_1(g,g)=(g,0),
\ee
\beq\label{eq:ab2}
\al_2(u,z)=u,\quad  \be_2(g,0)=g_t+2\mu g.
\ee
Furthermore, let
\beq\label{eq:ab}
\al=\al_2\circ\al_1,\quad \be=\be_2\circ\be_1.
\ee
The following result shows the commutativity of the upper part of the diagram (\ref{eq:diag})
or, the same, the equivalence of the problems (\ref{eq:walk1})--(\ref{eq:Neu1}) and (\ref{eq:walk2})--(\ref{eq:Neu2}). 
Its proof is straightforward.
\begin{lem}\label{lem:eq1}
 Suppose $\ga\ge 2$. Then

(i) The maps $\al_1: V_b^\ga\to Z_b^\ga$ and $\be_1: W_d^\ga\to W_0^\ga$ defined by
 (\ref{eq:ab1}) are isomorphisms.

(ii) $\be_1\circ L_{WS}=L_{WS}^\prime\circ\al_1$.
\end{lem}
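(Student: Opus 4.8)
The plan is to verify the two assertions directly from the definitions, since the maps $\al_1$ and $\be_1$ are explicit and linear. First I would prove part (i) by exhibiting inverses. Given the definition $\al_1(v,w)=\left(\frac{v+w}{2},\frac{v-w}{2}\right)$, the obvious candidate for the inverse is $(u,z)\mapsto(u+z,u-z)$, and I would check that composing in either order yields the identity. The only substantive point is that $\al_1$ actually maps $V_b^\ga$ into $Z_b^\ga$ and that its inverse maps $Z_b^\ga$ back into $V_b^\ga$. For this I would take $(v,w)\in V^\ga$, set $u=\frac{v+w}{2}$, $z=\frac{v-w}{2}$, and compute $u_t+z_x=\frac{1}{2}(v_t+v_x)+\frac{1}{2}(w_t-w_x)$ and $z_t+u_x=\frac{1}{2}(v_t+v_x)-\frac{1}{2}(w_t-w_x)$; since $(v_t+v_x,\,w_t-w_x)\in W^\ga$ by membership in $V^\ga$, both combinations lie in $H^{0,\ga}$, so $(u,z)\in Z^\ga$. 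The boundary conditions transfer as well: the Neumann conditions \reff{eq:Neu1}, namely $v(0,t)=w(0,t)$ and $v(1,t)=w(1,t)$, are exactly the statements $z(0,t)=z(1,t)=0$, which is \reff{eq:Neu2}, using the trace maps supplied by Lemma~\ref{lem:V}~(iii) and Remark~\ref{rem:trace} to make sense of the pointwise boundary values. Hence $\al_1(V_b^\ga)\subseteq Z_b^\ga$, and the reverse inclusion for the inverse is symmetric. That $\be_1(g,g)=(g,0)$ is an isomorphism $W_d^\ga\to W_0^\ga$ is immediate, with inverse $(g,0)\mapsto(g,g)$.

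Next I would establish the commutativity relation (ii), $\be_1\circ L_{WS}=L_{WS}^\prime\circ\al_1$, by evaluating both sides on an arbitrary $(v,w)\in V_b^\ga$. Writing out $L_{WS}(v,w)=\bigl(v_t+v_x-\mu(w-v),\,w_t-w_x-\mu(v-w)\bigr)$, I would observe that both components are equal as elements of $H^{0,\ga}$ precisely when the off-diagonal terms match; in fact $L_{WS}$ lands in $W_d^\ga$ by construction of its codomain, so applying $\be_1$ collapses the pair to its first component in the first slot and $0$ in the second. On the other side, $\al_1(v,w)=(u,z)$ with $u,z$ as above, and $L_{WS}^\prime(u,z)=\bigl(u_t+z_x,\,z_t+u_x+2\mu z\bigr)$. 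The first components then agree after the substitution, and the crux is to check that the second component of $L_{WS}^\prime\circ\al_1$ vanishes, matching the $0$ produced by $\be_1$. That amounts to the identity $z_t+u_x+2\mu z=0$ reducing to a multiple of the difference of the two equations in $L_{WS}(v,w)$; a short computation with $u=\frac{v+w}{2}$, $z=\frac{v-w}{2}$ confirms this.

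I expect the arguments to be genuinely routine, which matches the author's remark that the proof is straightforward; the only care needed is bookkeeping. The mild obstacle is not a computation but a well-definedness subtlety: one must confirm at each stage that the algebraic combinations defining the maps respect the functional-analytic constraints cutting out $V_b^\ga$ and $Z_b^\ga$—namely the membership of the relevant derivative combinations in $W^\ga$ and the boundary traces lying in $L^2\left(0,\frac{2\pi}{\om}\right)$. These are exactly the properties guaranteed by Lemmas~\ref{lem:V} and~\ref{lem:U} together with Remark~\ref{rem:trace}, so invoking them lets me assert that $\al_1$ and its inverse are bounded between the respective spaces rather than merely linear bijections of underlying sets. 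With those embeddings and trace continuity in hand, boundedness of $\al_1,\al_1^{-1},\be_1,\be_1^{-1}$ follows from the explicit formulas, completing the proof that they are isomorphisms of Banach spaces and that the upper square of \reff{eq:diag} commutes.
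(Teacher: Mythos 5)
Your proof is correct and is exactly the routine verification the paper intends: the paper states only that the proof of this lemma is straightforward and omits it entirely, and your argument -- exhibiting the explicit inverses, checking via $u_t+z_x=\frac{1}{2}(v_t+v_x)+\frac{1}{2}(w_t-w_x)$ and $z_t+u_x=\frac{1}{2}(v_t+v_x)-\frac{1}{2}(w_t-w_x)$ that the derivative constraints defining $V^\ga$ and $Z^\ga$ correspond, transferring the boundary conditions through the trace maps of Lemma~\ref{lem:V}~(iii) and Remark~\ref{rem:trace}, and verifying the commutativity identity by direct substitution -- supplies that omitted content faithfully and completely. No gaps.
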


To prove the commutativity of the lower part of the diagram (\ref{eq:diag}), i.e.,
the equivalence of the problems (\ref{eq:tel})--(\ref{eq:Neu}) and
(\ref{eq:walk2})--(\ref{eq:Neu2}), we will need the following simple lemma.

\begin{lem}\label{lem:fg}
If $\mu\ne 0$, then the map $f\in H^{0,\ga-1}\mapsto g\in H^{0,\ga}$ where
\begin{equation}\label{eq:fg}
\begin{array}{cc}
g_t+2\mu g=f\\
\displaystyle
g\left(x,t+\frac{2\pi}{\om}\right) = g(x,t)
\end{array}
\end{equation}
is bijective.
\end{lem}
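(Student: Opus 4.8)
\textbf{Proof plan for Lemma~\ref{lem:fg}.}
The equation $g_t+2\mu g=f$ is a linear first-order ODE in $t$ (with $x$ a passive parameter), and the periodicity condition in \reff{eq:fg} singles out a unique periodic solution whenever $\mu\neq 0$. The plan is to solve the problem explicitly by Fourier series in $t$, since the spaces $H^{0,\ga}$ and $H^{0,\ga-1}$ are defined precisely through Fourier coefficients with respect to $e^{ik\om t}$. First I would expand $f(x,t)=\sum_{k\in\Z}f_k(x)e^{ik\om t}$ and seek $g(x,t)=\sum_{k\in\Z}g_k(x)e^{ik\om t}$. Substituting into $g_t+2\mu g=f$ yields $(ik\om+2\mu)g_k=f_k$, so that
\beq\label{eq:gk}
g_k(x)=\frac{f_k(x)}{2\mu+ik\om}.
\ee
Because $\mu\neq 0$, the denominator $2\mu+ik\om$ never vanishes for any $k\in\Z$, so each coefficient $g_k$ is uniquely determined by $f_k$; this simultaneously establishes injectivity and identifies the candidate preimage, so the whole statement reduces to checking that the map $f\mapsto g$ defined by \reff{eq:gk} carries $H^{0,\ga-1}$ bijectively onto $H^{0,\ga}$.

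Next I would verify the norm bookkeeping that makes the scaling parameters match up, which is the real content of the lemma. From the definition \reff{eq:1.3}, the $H^{0,\ga}$-norm of $g$ weights the $k$-th mode by $(1+k^2)^{\ga}$, whereas the $H^{0,\ga-1}$-norm of $f$ weights it by $(1+k^2)^{\ga-1}$. The gain of one power of $(1+k^2)$ is exactly compensated by the decay of the multiplier in \reff{eq:gk}: since $|2\mu+ik\om|^2=4\mu^2+k^2\om^2$, there are positive constants $c_1,c_2$ (depending on $\mu$ and $\om$ but not on $k$) with
\beq\label{eq:mult}
c_1(1+k^2)\le\frac{1+k^2}{|2\mu+ik\om|^{-2}}=(1+k^2)(4\mu^2+k^2\om^2)\ge c_2(1+k^2)^{2}
\ee
in the sense that the quantity $(1+k^2)\,|2\mu+ik\om|^{-2}$ is bounded above and below by positive constants uniformly in $k$. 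Consequently $\|g\|_{H^{0,\ga}}^2=\sum_k(1+k^2)^{\ga}|g_k|_{L^2}^2$ is comparable to $\sum_k(1+k^2)^{\ga-1}|f_k|_{L^2}^2=\|f\|_{H^{0,\ga-1}}^2$, which shows both that $g\in H^{0,\ga}$ whenever $f\in H^{0,\ga-1}$ and, reading the inequalities in reverse, that every $g\in H^{0,\ga}$ arises from some $f=g_t+2\mu g\in H^{0,\ga-1}$. This gives surjectivity and closes the argument.

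The only genuine obstacle is the uniform two-sided bound on the Fourier multiplier in \reff{eq:mult}; everything else is a routine separation of variables. One must be slightly careful that the constants are uniform in $k$ across both the small-$k$ regime (where $4\mu^2$ dominates and the hypothesis $\mu\neq 0$ is essential to keep the multiplier bounded) and the large-$k$ regime (where $k^2\om^2$ dominates and supplies the extra factor of $k^2$ that accounts for the shift from $\ga-1$ to $\ga$). I would also note in passing that the $x$-derivatives enter only as the passive parameter in \reff{eq:1.3}: since differentiation in $x$ commutes with the multiplier in \reff{eq:gk}, the same comparison holds summand-by-summand, so no separate treatment of the $\d_x^m$ terms is needed and the bijectivity is established on the full scale of spaces.
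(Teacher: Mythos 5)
Your argument is correct, but it proceeds by a genuinely different route from the paper. The paper solves \reff{eq:fg} by variation of constants: it writes down the explicit integral representation \reff{eq:g} of the unique periodic solution (the periodicity condition fixes the integration constant via the factor $\bigl(1-e^{-4\pi\mu/\om}\bigr)^{-1}$, which is where $\mu\neq 0$ enters) and declares the lemma proved. You instead diagonalize the problem in the Fourier basis $e^{ik\om t}$, obtaining $g_k=f_k/(2\mu+ik\om)$, and check that the multiplier $(1+k^2)\,|2\mu+ik\om|^{-2}$ is bounded above and below uniformly in $k$. Your approach has the advantage of meshing directly with the definition of the norms on $H^{0,\ga}$ and $H^{0,\ga-1}$, so it actually \emph{verifies} the two-sided norm estimate (hence that the map is an isomorphism of Banach spaces, not merely a bijection) --- a point the paper's one-line proof leaves implicit. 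The paper's approach, on the other hand, produces the explicit kernel formula \reff{eq:g}, which is reused later in Corollary~\ref{cor:uv} and in the construction of $\al_2^{-1}$ in Lemma~\ref{lem:eq2}, so the integral representation is needed anyway. One small blemish: your displayed chain of inequalities is garbled --- you divide by $|2\mu+ik\om|^{-2}$ where you mean to multiply by it, and you concatenate a $\le$ with a $\ge$; the quantity you actually need to control is $(1+k^2)/(4\mu^2+k^2\om^2)$, and the correct claim (uniform two-sided bounds, with $\mu\neq0$ needed only at $k=0$) is the one you state in words immediately afterwards, so this is a typographical rather than a mathematical error.
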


\begin{proof}
The problem (\ref{eq:fg}) has a unique solution given by the formula
\begin{equation}\label{eq:g}
g(x,t) = \frac{e^{-2\mu \left(t+\frac{2\pi}{\om}\right)}}{1-e^{-2\mu\frac{2\pi}{\om}}}\int\limits_0^{\frac{2\pi}{\om}}e^{2\mu\tau}
f(x,\tau)\,d\tau-\int\limits_0^te^{2\mu\left(\tau-t\right)}
f(x,\tau)\,d\tau,
\end{equation}
which gives us the lemma.
\end{proof}

\begin{cor}\label{cor:uv}
If $\mu\ne 0$, then the map $u_x\in H^{0,\ga-1}\mapsto z\in H^{0,\ga}$ where
\begin{equation}\label{eq:u_xv}
\begin{array}{cc}
z_t+2\mu z=u_x\\\displaystyle
z\left(x,t+\frac{2\pi}{\om}\right) = z(x,t)\nonumber
\end{array}
\end{equation}
is bijective. Furthermore,
\begin{equation}\label{eq:v}
z(x,t) = \frac{e^{-2\mu \left(t+\frac{2\pi}{\om}\right)}}{1-e^{-2\mu\frac{2\pi}{\om}}}\int\limits_0^{\frac{2\pi}{\om}}e^{2\mu\tau}
u_x(x,\tau)\,d\tau-\int\limits_0^te^{2\mu\left(\tau-t\right)}
u_x(x,\tau)\,d\tau
\end{equation}
\end{cor}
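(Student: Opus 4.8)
The plan is to deduce Corollary~\ref{cor:uv} directly from Lemma~\ref{lem:fg}, since the two statements coincide after a mere change of notation. First I would observe that the problem \reff{eq:u_xv} for the pair $(u_x,z)$ is formally identical to the problem \reff{eq:fg} for the pair $(f,g)$: renaming the forcing term $f$ to $u_x$ and the unknown $g$ to $z$ carries one into the other, with the periodicity condition left untouched. Since $u_x$ is here treated as an arbitrary element of $H^{0,\ga-1}$---the corollary imposes no relation between $z$ and $u$ other than the differential equation itself---the solution operator $u_x\mapsto z$ is precisely the operator $f\mapsto g$ studied in Lemma~\ref{lem:fg}.

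It then remains only to quote the lemma. For $\mu\ne 0$, Lemma~\ref{lem:fg} gives that $f\mapsto g$ is a bijection of $H^{0,\ga-1}$ onto $H^{0,\ga}$, including the gain of one order of regularity; the identical conclusion therefore holds for $u_x\mapsto z$. The explicit representation \reff{eq:v} is obtained from formula \reff{eq:g} by substituting $u_x$ for $f$. There is no genuine obstacle here: the corollary is a verbatim specialization of the lemma, and the single point worth recording is that the substitution is legitimate because $u_x$ ranges freely over the whole space $H^{0,\ga-1}$, so that both the bijectivity and the solution formula transfer without change.
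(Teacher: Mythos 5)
Your proposal is correct and matches the paper's intent exactly: the corollary is stated without proof precisely because it is Lemma~\ref{lem:fg} with $f$ renamed to $u_x$ and $g$ to $z$, and formula \reff{eq:v} is \reff{eq:g} under the same substitution. Nothing further is needed.
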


Now we state the desired equivalence result.

\begin{lem}\label{lem:eq2} Suppose $\ga\ge 2$. Then

(i) The maps $\al_2: Z_b^\ga\to U_b^\ga$ and $\be_2: W_0^\ga\to H^{0,\ga-1}$ defined by
 (\ref{eq:ab2}) are isomorphisms.

(ii) $\be_2\circ L_{WS}^\prime=L_{TE}\circ\al_2$.
\end{lem}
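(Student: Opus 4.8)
The plan is to establish the two isomorphism assertions of part (i) separately and then to verify the algebraic identity of part (ii) by direct differentiation. The single fact I would lean on throughout is that, since by construction $L_{WS}^\prime$ maps $Z_b^\ga$ into $W_0^\ga$, the second component of $L_{WS}^\prime(u,z)$ vanishes identically on $Z_b^\ga$; that is, every pair $(u,z)\in Z_b^\ga$ satisfies the second equation of (\ref{eq:walk2}), namely $z_t+u_x+2\mu z=0$. This relation, which both recovers $z$ from $u_x$ and transfers the boundary conditions, is the workhorse of the argument (and I assume $\mu\ne 0$, as in Lemma~\ref{lem:fg}).

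First I would dispose of $\be_2$ and of part (ii), which are comparatively soft. Identifying $W_0^\ga$ with $H^{0,\ga}$ via $(g,0)\leftrightarrow g$, the map $\be_2$ becomes $g\mapsto g_t+2\mu g$, which is bounded from $H^{0,\ga}$ into $H^{0,\ga-1}$ (exactly one order in $t$ is lost, and $H^{0,\ga}\hookrightarrow H^{0,\ga-1}$) and is a bijection onto $H^{0,\ga-1}$ by Lemma~\ref{lem:fg}; since $W_0^\ga$ is a closed subspace of the Banach space $(H^{0,\ga})^2$ and $H^{0,\ga-1}$ is Banach, the open mapping theorem (or the explicit inverse (\ref{eq:g})) makes $\be_2$ an isomorphism. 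For (ii) I would take $(u,z)\in Z_b^\ga$, use the relation to write $L_{WS}^\prime(u,z)=(u_t+z_x,0)$, apply $\be_2$ to get $(u_t+z_x)_t+2\mu(u_t+z_x)=u_{tt}+z_{xt}+2\mu u_t+2\mu z_x$, and eliminate $z_{xt}$ by differentiating $z_t+u_x+2\mu z=0$ in $x$, so that $z_{xt}=-u_{xx}-2\mu z_x$. The terms $2\mu z_x$ cancel and what survives is precisely $u_{tt}-u_{xx}+2\mu u_t=L_{TE}(u)=L_{TE}\circ\al_2(u,z)$.

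The substantive part is that $\al_2$ is an isomorphism. Well-definedness into $U_b^\ga$ is exactly where the relation pays off: from $z_t+u_x=-2\mu z\in H^{0,\ga}$ one reads $u_x=-z_t-2\mu z\in H^{0,\ga-1}$, and writing $u_{tt}-u_{xx}=\partial_t(u_t+z_x)-\partial_x(z_t+u_x)=\partial_t(u_t+z_x)+2\mu z_x$ (using the relation again) places $u_{tt}-u_{xx}$ in $H^{0,\ga-1}$ once one knows $z_x=(u_t+z_x)-u_t\in H^{0,\ga-1}$; the hypothesis $\ga\ge 2$ guarantees that all the shifted spaces have nonnegative smoothness index. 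The boundary conditions transfer cleanly: for $(u,z)\in Z_b^\ga$ one has $z(0,\cdot)=z(1,\cdot)=0$ by (\ref{eq:Neu2}), hence $z_t$ vanishes there, and $u_x=-z_t-2\mu z$ yields $u_x(0,\cdot)=u_x(1,\cdot)=0$, i.e.\ (\ref{eq:Neu}). Injectivity is immediate from the relation together with Corollary~\ref{cor:uv}: two pairs with the same first component have $z$-components solving the same first-order periodic problem $z_t+2\mu z=-u_x$, so they coincide. For surjectivity, given $u\in U_b^\ga$ I would let $z$ be the unique periodic solution of $z_t+2\mu z=-u_x$ furnished by Corollary~\ref{cor:uv} and check that $(u,z)\in Z_b^\ga$; boundedness of $\al_2$ follows from the displayed estimates, and the inverse is then continuous by the open mapping theorem, using completeness of $U^\ga$ (Lemma~\ref{lem:U}(i)) and of $Z^\ga$ (the argument of Lemma~\ref{lem:V}(i)).

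The one genuinely delicate point — and the step I expect to be the main obstacle — is confirming, in the surjectivity construction, that the reconstructed pair actually lies in $Z^\ga$, specifically that $u_t+z_x\in H^{0,\ga}$ (the other defining quantity $z_t+u_x=-2\mu z$ is in $H^{0,\ga}$ for free). A naive count only places $u_t+z_x$ in $H^{0,\ga-1}$, so one order in $t$ must be recovered by a cancellation. The device is to apply $\partial_t+2\mu$ to $g:=u_t+z_x$ and use $z_t+2\mu z=-u_x$ to find that $g$ solves the periodic problem $g_t+2\mu g=u_{tt}-u_{xx}+2\mu u_t=L_{TE}(u)$, whose right-hand side lies in $H^{0,\ga-1}$ because $u\in U_b^\ga$; Lemma~\ref{lem:fg}, whose uniqueness of periodic solutions of the homogeneous equation simultaneously pins down the boundary values of the reconstructed $z$, then forces $g\in H^{0,\ga}$. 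Once this regularity bootstrap is in place, the remaining verifications are routine bookkeeping.
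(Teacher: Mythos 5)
Your proof is correct and follows essentially the same route as the paper: both rest on the relation $z_t+u_x+2\mu z=0$ holding on $Z_b^\ga$ (which the paper likewise assumes when it takes $(u,z)$ to ``solve'' (\ref{eq:walk2})), on Lemma~\ref{lem:fg} and Corollary~\ref{cor:uv} to reconstruct $z$ from $u_x$ and to invert $\be_2$, and on the identity $(\partial_t+2\mu)(u_t+z_x)=L_{TE}(u)$ for both the commutativity in (ii) and the regularity bootstrap in the surjectivity of $\al_2$. The only difference is presentational: where the paper recovers the first equation of (\ref{eq:walk2}) by a weak formulation with periodic test functions, you obtain the same identity distributionally and appeal to the uniqueness in Lemma~\ref{lem:fg}, and you make explicit the continuity/open-mapping bookkeeping and the hypothesis $\mu\ne 0$ that the paper leaves implicit.
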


\begin{proof}
{\bf(i)} We first show that $\al_2$ maps $Z_b^\ga$ to $U_b^\ga$ and $\be_2$ maps $W_0^\ga$
to $H^{0,\ga-1}$. The latter is obvious. To show the former, 
suppose that $(u,z)\in Z_b^{\gamma}$ solves (\ref{eq:walk2})--(\ref{eq:Neu2}). 
By the definition of $Z^\ga$, $z_t\in H^{0,\ga-1}$ and $z_t+u_x\in H^{0,\ga-1}$, hence
$u_x\in H^{0,\ga-1}$.
Since $z\in H^{1,\ga-1}$, we have $z_t\in H^{1,\ga-2}$. 
Now, as $z_t+u_x\in H^{0,\ga}$, we have  $u_x\in H^{1,\ga-2}$ and, hence, $u_{xx}\in H^{0,\ga-2}$.
Therefore $z_{tx}+u_{xx}\in  H^{0,\ga-2}$ and $u_{tt}+z_{xt}\in  H^{0,\ga-2}$.
Consequently, $u_{tt}-u_{xx}\in  H^{0,\ga-2}$ as well. This implies that $u\in U^\ga$.
To check the boundary conditions  (\ref{eq:per}) and (\ref{eq:Neu}), 
we take into account part (iii) of Lemma~\ref{lem:U} about the traces of $u$.
Now, conditions (\ref{eq:Neu}) follow from (\ref{eq:Neu2}) and the second equation of (\ref{eq:walk2}).
Conditions (\ref{eq:per}) are a straightforward consequence of (\ref{eq:per2}).

Further set 
$$
\al_2^{-1}(u)=(u,z) \mbox{ where } z \mbox{ is given by } (\ref{eq:v})
$$ 
and 
$$
\be_2^{-1}(f)=(g,0) \mbox{ where } g \mbox{ is given by } (\ref{eq:g}).
$$ 
We are done if we show that
\beq\label{eq:maps}
\al_2^{-1} \mbox{ maps } U_b^\ga \mbox{ into } Z_b^\ga  \mbox{ and }
\be_2^{-1} \mbox{ maps } H^{0,\ga-1} \mbox{ into } W_0^\ga
\ee
and that
\beq\label{eq:comp}
\be_2^{-1}\circ\be_2=I_{W_0^\ga},\quad \be_2\circ\be_2^{-1}=I_{H^{0,\ga-1}},\quad 
\al_2\circ\al_2^{-1}=I_{U^\ga_b},\quad \al_2^{-1}\circ\al_2=I_{Z^\ga_b}.
\ee

We start with proving (\ref{eq:maps}).
Since the right-hand side of the representation (\ref{eq:v}) belongs to $H^{0,\ga}$, we have $\d_tz\in H^{0,\ga-1}$. Differentiating 
(\ref{eq:v})  with respect to $t$, we easily arrive at the second equality of (\ref{eq:walk2}). To meet the first equality, we start
from the weak formulation of (\ref{eq:tel})--(\ref{eq:Neu}):
By Lemma~\ref{lem:fg}, any function $f\in H^{0,\ga-1}$ admits a unique representation $f=g_t+2\mu g$ where 
$g\in H^{0,\ga}$. On the account of this fact, for any $\vphi\in C^{1}\left([0,1]\times[0,\frac{2\pi}{\om}]\right)$
with $\vphi\left(x,t+\frac{2\pi}{\om}\right)=\vphi(x,t)$ we have
\begin{eqnarray*}
\lefteqn{
0=
\int\limits_{0}^{\frac{2\pi}{\om}}\int\limits_{0}^1\left[-u_{tt}\vphi-u_x\vphi_x-2\mu u_t\vphi+(g_t+2\mu g)\vphi\right]\,dxdt
}\\&&
=\int\limits_{0}^{\frac{2\pi}{\om}}\int\limits_{0}^1\left[u_{t}\vphi_t- g\vphi_t+[z_t+2\mu z]\vphi_x-2\mu u_t\vphi+2\mu g\vphi\right]\,dxdt
\\&&
=\int\limits_{0}^{\frac{2\pi}{\om}}\int\limits_{0}^1\left[u_{t}\vphi_t-g\vphi_t+z_x\vphi_t-2\mu z_x\vphi-2\mu u_t\vphi+2\mu g\vphi\right]\,dxdt
\\&&
=\int\limits_{0}^{\frac{2\pi}{\om}}\int\limits_{0}^1\left[(u_{t}+z_x-g)\vphi_t-2\mu( z_x+u_t- g)\vphi\right]\,dxdt
\\&&
=\int\limits_{0}^{\frac{2\pi}{\om}}\int\limits_{0}^1\left[(u_{t}+z_x-g)(\vphi_t-2\mu\vphi)\right]\,dxdt.
\end{eqnarray*}
Taking a constant $\vphi$, we get
\beq\label{eq:psi}
\int\limits_{0}^{\frac{2\pi}{\om}}\int\limits_{0}^1(u_{t}+z_x-g)\,dxdt=0.
\ee
It follows that  
$$
u_{t}+z_x-g=0\quad \mbox{ a.e.  on}\quad (0,1)\times\left(0,\frac{2\pi}{\om}\right).
$$
The first equality of  (\ref{eq:walk2}) is  therefore fulfilled. Furthermore, the  system (\ref{eq:walk2}) implies that $(u,z)\in Z^\ga$.
The boundary conditions (\ref{eq:per2}) and (\ref{eq:Neu2}) follow from 
 (\ref{eq:per}), (\ref{eq:Neu}),  and (\ref{eq:walk2}). 

To finish this part of the proof, it remains to note that the first two equalities in (\ref{eq:comp}) 
follow by Lemma~\ref{lem:fg}, the third equality  is straightforward, and the 
last one  follows from (\ref{eq:v}) and the second equality in 
(\ref{eq:walk2}).

{\bf (ii)} Differentiating now the first equality of (\ref{eq:walk2}) with respect to $t$ and
the second  with respect to $x$, subtracting the resulting equations and then
substituting $z_x$ from the first equation of (\ref{eq:walk2}), we come to 
(\ref{eq:tel}) with $f=g_t+2\mu g$. 
Hence $\left(L_{TE}\circ\al_2\right)(u,z)=g_t+2\mu g$.
Moreover, by definitions of $L_{WS}^\prime$ and $\be_2$, we have  $L_{WS}^\prime(u,z)=(g,0)$
and $\be_2(g,0)=g_t+2\mu g$. The desired assertion follows.

The proof is complete.
\end{proof}

We are prepared to formulate the main result of this section about 
the equivalence of the random walk problem (\ref{eq:walk1})--(\ref{eq:Neu1}) and the telegraph problem (\ref{eq:tel})--(\ref{eq:Neu}).

\begin{thm}\label{thm:eq} Suppose $\ga\ge 2$. Then

(i)  The maps $\al: V_b^\ga\to U_b^\ga$ and $\be: W_d^\ga\to H^{0,\ga-1}$ defined by
 (\ref{eq:ab}) are isomorphisms.

(ii) $\be\circ L_{WS}=L_{TE}\circ\al$.
\end{thm}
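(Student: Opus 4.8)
The plan is to obtain Theorem~\ref{thm:eq} as a purely formal consequence of the two ``square'' lemmas already in hand, Lemma~\ref{lem:eq1} and Lemma~\ref{lem:eq2}, which assert that the upper and lower squares of the diagram~(\ref{eq:diag}) each commute and have isomorphisms for their vertical arrows. The whole content of the theorem is simply that the outer rectangle of~(\ref{eq:diag}) commutes and that its composed vertical maps $\al=\al_2\circ\al_1$ and $\be=\be_2\circ\be_1$ are isomorphisms. All the analytic work (traces, regularity of $z$ from~(\ref{eq:v}), the weak-formulation argument recovering the first equation of~(\ref{eq:walk2})) has been absorbed into the two lemmas, so no new estimates are needed here; the hypothesis $\ga\ge 2$ is precisely what both lemmas demand and it carries over unchanged.

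For part (i) I would invoke that a composition of Banach-space isomorphisms is again an isomorphism. By Lemma~\ref{lem:eq1}(i) the maps $\al_1\colon V_b^\ga\to Z_b^\ga$ and $\be_1\colon W_d^\ga\to W_0^\ga$ are isomorphisms, and by Lemma~\ref{lem:eq2}(i) so are $\al_2\colon Z_b^\ga\to U_b^\ga$ and $\be_2\colon W_0^\ga\to H^{0,\ga-1}$. Hence, by the definitions~(\ref{eq:ab}), the composites $\al=\al_2\circ\al_1\colon V_b^\ga\to U_b^\ga$ and $\be=\be_2\circ\be_1\colon W_d^\ga\to H^{0,\ga-1}$ are isomorphisms, with inverses $\al_1^{-1}\circ\al_2^{-1}$ and $\be_1^{-1}\circ\be_2^{-1}$.

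For part (ii) I would chain the two commutativity identities. Inserting the definitions~(\ref{eq:ab}) and using associativity of composition, one computes
\begin{eqnarray*}
\be\circ L_{WS}
&=&\be_2\circ(\be_1\circ L_{WS})
=\be_2\circ(L_{WS}^\prime\circ\al_1)\\
&=&(\be_2\circ L_{WS}^\prime)\circ\al_1
=(L_{TE}\circ\al_2)\circ\al_1
=L_{TE}\circ\al,
\end{eqnarray*}
where the second equality is Lemma~\ref{lem:eq1}(ii) and the fourth is Lemma~\ref{lem:eq2}(ii). This is exactly the second relation in~(\ref{eq:comm}), completing the proof.

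Since the heavy lifting is already done in the two preceding lemmas, I do not expect any genuine obstacle. The only point requiring care is bookkeeping: composing the vertical maps in the order dictated by the diagram~(\ref{eq:diag}) (top-to-bottom, so that $\al$ carries $V_b^\ga$ down to $U_b^\ga$), and applying each lemma's commutativity identity on the correct side so that the intermediate arrows $L_{WS}^\prime\circ\al_1$ and $\be_2\circ L_{WS}^\prime$ match up and cancel cleanly in the chain.
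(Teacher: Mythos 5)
Your proposal is correct and follows exactly the paper's route: the paper simply notes that the theorem ``follows directly from Lemmas~\ref{lem:eq1} and \ref{lem:eq2},'' and your composition of the vertical isomorphisms together with the chaining of the two commutativity identities is precisely the (elementary) argument being invoked.
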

The theorem follows directly from Lemmas~\ref{lem:eq1} and \ref{lem:eq2}.

\section{Fredholm Alternative}~\label{sec:fredh}

Here we prove the Fredholm Alternative for the problem (\ref{eq:tel})--(\ref{eq:Neu}). From Section~\ref{sec:equiv}
we know how  the Fredholm and the index properties of the operators of the problems  (\ref{eq:tel})--(\ref{eq:Neu}) and (\ref{eq:walk1})--(\ref{eq:Neu1}) are
related to each other. More precisely, the following lemma is true.

 \begin{lem}\label{lem:dimker}
Suppose $\ga \ge 1$ and $\mu\ne 0$.
Then 

(i) $\dim\ker(L_{WS})=\dim\ker(L_{TE})$.

(ii) $\dim\ker(L_{WS}^*)=\dim\ker(L_{TE}^*)$.
\end{lem}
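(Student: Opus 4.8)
The plan is to exploit the commutative diagram \reff{eq:diag} established in Theorem~\ref{thm:eq}, which provides isomorphisms $\al: V_b^\ga\to U_b^\ga$ and $\be: W_d^\ga\to H^{0,\ga-1}$ satisfying $\be\circ L_{WS}=L_{TE}\circ\al$. For part (i), I would argue that such an intertwining by isomorphisms induces a linear bijection between the kernels. Explicitly, if $(v,w)\in\ker(L_{WS})$, then $L_{TE}(\al(v,w))=\be(L_{WS}(v,w))=\be(0)=0$, so $\al$ maps $\ker(L_{WS})$ into $\ker(L_{TE})$; since $\al$ is injective, the restriction $\al|_{\ker(L_{WS})}$ is injective. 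Surjectivity onto $\ker(L_{TE})$ follows symmetrically using $\al^{-1}$ and the relation $L_{WS}\circ\al^{-1}=\be^{-1}\circ L_{TE}$, which is just \reff{eq:comm} rearranged. Hence $\al$ restricts to an isomorphism $\ker(L_{WS})\cong\ker(L_{TE})$, giving the equality of dimensions. Note the hypothesis $\ga\ge 1$ here is weaker than the $\ga\ge 2$ of Theorem~\ref{thm:eq}, so one subtlety is whether the equivalence genuinely holds at $\ga\ge 1$; I would either invoke a regularity/smoothing argument showing kernel elements automatically lie in the higher-regularity space, or verify that the relevant isomorphisms persist down to $\ga\ge 1$.

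For part (ii), the analogous statement for adjoints requires more care, since dualizing the diagram reverses the arrows. The clean approach is to take Hilbert-space adjoints throughout. From $\be\circ L_{WS}=L_{TE}\circ\al$ one obtains, upon taking adjoints, $L_{WS}^*\circ\be^*=\al^*\circ L_{TE}^*$, equivalently $L_{WS}^*=\al^*\circ L_{TE}^*\circ(\be^*)^{-1}$. Because $\al$ and $\be$ are isomorphisms (bounded with bounded inverses), their adjoints $\al^*$ and $\be^*$ are likewise isomorphisms between the dual spaces. The same kernel-transport argument as in (i) then applies to the intertwining relation $L_{WS}^*\circ\be^*=\al^*\circ L_{TE}^*$: the isomorphism $\be^*$ carries $\ker(L_{TE}^*)$ bijectively onto $\ker(L_{WS}^*)$, yielding the desired equality of dimensions.

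The main obstacle I anticipate is bookkeeping about which spaces the adjoints act on and ensuring the duality pairings are set up consistently. Since $L_{WS}\in\LL(V_b^\ga;W_d^\ga)$ and $L_{TE}\in\LL(U_b^\ga;H^{0,\ga-1})$ map between different pairs of Hilbert spaces, the adjoints $L_{WS}^*$ and $L_{TE}^*$ act between the corresponding dual (or Riesz-identified) spaces, and one must confirm that $\al^*$, $\be^*$ are the appropriate isomorphisms connecting those duals. A secondary point is the gap between $\ga\ge 1$ in this lemma and $\ga\ge 2$ in Theorem~\ref{thm:eq}: I would need to check that the isomorphism property of $\al$, $\be$ (or at least the kernel correspondence) extends to the range $1\le\ga<2$, presumably via the smoothing property of solutions advertised in the abstract, which forces kernel elements into the regularity class where the clean equivalence holds. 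Once these identifications are pinned down, both parts reduce to the elementary observation that conjugation of an operator by isomorphisms preserves kernel dimension.
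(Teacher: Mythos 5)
Your proposal is correct and is essentially the argument the paper intends: the paper in fact states Lemma~\ref{lem:dimker} without a written proof, treating it as an immediate consequence of the conjugation relation $\be\circ L_{WS}=L_{TE}\circ\al$ from Theorem~\ref{thm:eq}, which is exactly the kernel-transport argument you give for both $L$ and $L^*$. Your flagged subtlety about $\ga\ge 1$ versus $\ga\ge 2$ is genuine, and your proposed resolution via the smoothing property (the kernels are independent of $\ga$, cf.\ Remark~\ref{rem:ortog}) is the right one.
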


The Fredholm solvability for the periodic-Neumann problem for the telegraph equation is now a straightforward consequence of our Fredholm result
for the corresponding  correlated random walk problem.
 \begin{thm}\cite[Theorem~1]{KmRe1}\label{thm:fredh_walk}
Suppose $\ga \ge 1$ and $\mu\ne 0$.
Then we have:

(i) $L_{WS}$ is a Fredholm operator of index zero
from $V_b^{\gamma}$ into $W_d^{\gamma}$.

(ii) The image of $L_{WS}$ is the set of all $(g,g) \in W^\ga$ such that
\begin{equation}\label{eq:ortog1}
\int\limits_0^{\frac{2\pi}{\om}}
\int\limits_0^1g(x,t)(\tilde v(x,t)+\tilde w(x,t))\,dx\,dt=0
\quad\mbox{for \,all}\quad (\tilde v,\tilde w)\in\ker(\tilde L_{WS}).
\end{equation}
\end{thm}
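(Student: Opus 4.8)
The plan is to deduce both claims of the lemma from the intertwining relation of Theorem~\ref{thm:eq}. That theorem supplies isomorphisms $\al:V_b^\ga\to U_b^\ga$ and $\be:W_d^\ga\to H^{0,\ga-1}$ satisfying $\be\circ L_{WS}=L_{TE}\circ\al$, equivalently $L_{TE}=\be\circ L_{WS}\circ\al^{-1}$; thus $L_{TE}$ and $L_{WS}$ are conjugate by isomorphisms, and every quantity the lemma compares is a conjugation invariant. The linear-algebra part of the argument is then immediate, and the work is concentrated entirely in matching the scaling exponent of the two results.

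For (i) I would argue directly. Since $\be$ is injective, $(v,w)\in\ker L_{WS}$ iff $\be L_{WS}(v,w)=0$ iff $L_{TE}\bigl(\al(v,w)\bigr)=0$, i.e.\ iff $\al(v,w)\in\ker L_{TE}$. As $\al$ is a linear isomorphism it therefore restricts to a bijection of $\ker L_{WS}$ onto $\ker L_{TE}$, whence $\dim\ker L_{WS}=\dim\ker L_{TE}$.

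For (ii) I would pass to Hilbert-space adjoints, which is legitimate because $V_b^\ga$, $W_d^\ga$, $U_b^\ga$, and $H^{0,\ga-1}$ are Hilbert spaces; then $\al^*$ and $\be^*$ are again isomorphisms. Taking adjoints in $\be L_{WS}=L_{TE}\al$ gives $L_{WS}^*\be^*=\al^*L_{TE}^*$, i.e.\ $L_{TE}^*=(\al^*)^{-1}L_{WS}^*\be^*$, so $L_{TE}^*$ is $L_{WS}^*$ conjugated by isomorphisms. The reasoning of (i), applied to this factorization, shows that $\be^*$ carries $\ker L_{TE}^*$ bijectively onto $\ker L_{WS}^*$, whence the two adjoint kernels have equal dimension. (Alternatively, by the Fredholm property of $L_{WS}$ from Theorem~\ref{thm:fredh_walk} one has $\dim\ker L^*=\dim\coker L$ for both operators, and $\be$ identifies $\coker L_{WS}$ with $\coker L_{TE}$.)

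The only real obstacle is the mismatch of the scaling exponent. Theorem~\ref{thm:eq}, and in particular the proof in Lemma~\ref{lem:eq2} that $\al_2,\be_2$ are isomorphisms, is available only for $\ga\ge 2$, whereas the present lemma asserts $\ga\ge 1$; for $\ga\in[1,2)$ the space $H^{0,\ga-2}$ used there falls outside the defined scale and the construction does not transfer verbatim. I would bridge this with the smoothing of the two problems. Expanding a kernel element in its time-Fourier series reduces $L_{TE}u=0$ (respectively $L_{WS}(v,w)=0$) to a decoupled family of constant-coefficient ordinary differential equations in $x$ subject to Neumann data; because $\mu\ne 0$ these admit a nontrivial solution only for finitely many frequencies, and such solutions are $C^\infty$. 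Hence $\ker L_{WS}$ and $\ker L_{TE}$ lie in every $V_b^{\ga'}$ and $U_b^{\ga'}$, their dimensions are independent of $\ga$, and they may be computed at $\ga=2$, where Theorem~\ref{thm:eq} applies; the same $\ga$-independence, now for the cokernels (cut out by orthogonality to the smooth kernel of the formal adjoint), settles (ii). Establishing this regularity is the crux of the matter, the rest being the routine linear algebra of conjugate operators indicated above.
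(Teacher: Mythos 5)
There is a genuine gap here, and it is structural rather than technical: you have set out to prove the wrong implication. Theorem~\ref{thm:fredh_walk} is not proved in this paper at all --- it is imported verbatim as \cite[Theorem~1]{KmRe1}, and it is the \emph{input} from which the paper's main result (Theorem~\ref{thm:fredh_tel}, Fredholmness of $L_{TE}$) is deduced via the conjugation $\be\circ L_{WS}=L_{TE}\circ\al$. Your argument runs this deduction backwards: you propose to obtain the Fredholm property of $L_{WS}$ from that of $L_{TE}$, but within this paper the Fredholmness of $L_{TE}$ has no independent source (the introduction states explicitly that for the periodic--Neumann telegraph problem it was previously known only in the double-periodic case). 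At one point you even invoke ``the Fredholm property of $L_{WS}$ from Theorem~\ref{thm:fredh_walk}'' inside your proof of Theorem~\ref{thm:fredh_walk}, which is overtly circular. More fundamentally, conjugation by isomorphisms can only \emph{transfer} Fredholmness between two operators; it cannot create it. Nowhere do you establish the actual content of the theorem --- closed range, finite-dimensional kernel and cokernel, index zero --- which in \cite{KmRe1} requires genuine work (a priori estimates and a compactness argument for the hyperbolic system), not linear algebra.

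What your proposal does contain is, in substance, a proof of Lemma~\ref{lem:dimker} (equality of the kernel and adjoint-kernel dimensions of $L_{WS}$ and $L_{TE}$), together with the regularity observation of Remark~\ref{rem:ortog} needed to reconcile the exponents $\ga\ge 1$ and $\ga\ge 2$; that material is correct and useful, but it belongs to the passage from Theorem~\ref{thm:fredh_walk} to Theorem~\ref{thm:fredh_tel}, not to a proof of Theorem~\ref{thm:fredh_walk} itself. Note also that part (ii) of the statement --- the identification of $\im(L_{WS})$ as the set of $(g,g)$ orthogonal to $\ker(\tilde L_{WS})$ under the specific pairing $\int g(\tilde v+\tilde w)\,dx\,dt$ --- is never derived: you speak of abstract Hilbert-space adjoints but never relate $L_{WS}^*$ to the formal adjoint $\tilde L_{WS}$ or produce this bilinear form.
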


\begin{rem}\label{rem:ortog}{\rm
As it follows from the proof of \cite[Theorem~1]{KmRe1}, 
the kernel of the problem  (\ref{eq:walk1})--(\ref{eq:Neu1}) does not depend on $\ga\ge 1$ and,
given $\ga>1$,
 all $V_b^1$-solutions to the problem (\ref{eq:walk1})--(\ref{eq:Neu1}) with $f\in W_d^{\gamma}$
necessarily belong to $V_b^\ga$
(a smoothing effect). 
This implies, in particular, that, if we have (\ref{eq:ortog1}) for $f\in W_d^{\gamma}$ with $\ga\ge 2$, 
then we automatically have 
\begin{equation}
\int\limits_0^{\frac{2\pi}{\om}}
\int\limits_0^1\d_t^sg(x,t)(\tilde v(x,t)+\tilde w(x,t))\,dx\,dt=0
\,\,\mbox{for all}\,\, (\tilde v,\tilde w)\in\ker(\tilde L_{WS}) \mbox{ and } 0\le s\le\ga.\nonumber
\end{equation}

Note that the smoothing effect does not occur for
the corresponding initial-boundary problem with Neumann boundary conditions (see~\cite{Hillen,Km,Lyulko}). 
}
\end{rem}

We are prepared to formulate our main result.

 \begin{thm}\label{thm:fredh_tel}
Suppose $\ga \ge 2$ and $\mu\ne 0$.
Then we have:

(i) 
$L_{TE}$ is a Fredholm operator of index zero
from $U_b^{\gamma}$ into $H^{0,\gamma-1}$.

(ii) The image of $L_{TE}$ is the set of all $f \in H^{0,\ga-1}$ such that
\begin{equation}\label{eq:ortog2}
\int\limits_0^{\frac{2\pi}{\om}}
\int\limits_0^1f(x,t)u(x,t)\,dx\,dt=0
\,\,\mbox{for all}\,\, u\in\ker(\tilde L_{TE}).
\end{equation}
\end{thm}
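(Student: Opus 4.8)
The plan is to transfer the Fredholm Alternative for $L_{WS}$ (Theorem~\ref{thm:fredh_walk}) across the commutative diagram (\ref{eq:diag}) to $L_{TE}$, using the isomorphisms $\al$ and $\be$ from Theorem~\ref{thm:eq}. The algebraic heart is the relation $L_{TE}=\be\circ L_{WS}\circ\al^{-1}$, which is exactly the commutativity $\be\circ L_{WS}=L_{TE}\circ\al$. Since $\al:V_b^\ga\to U_b^\ga$ and $\be:W_d^\ga\to H^{0,\ga-1}$ are isomorphisms of the relevant spaces (bounded linear bijections with bounded inverses, by Lemmas~\ref{lem:eq1} and \ref{lem:eq2}), conjugating a Fredholm operator of index zero by isomorphisms preserves both the Fredholm property and the index. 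Concretely, $\ker(L_{TE})=\al(\ker(L_{WS}))$ and $\im(L_{TE})=\be(\im(L_{WS}))$, so $\dim\ker$ and $\codim\im$ are unchanged and the index stays zero. This gives part (i) immediately.

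For part (i) the one point requiring care is that Theorem~\ref{thm:eq} provides the isomorphisms only for $\ga\ge 2$, which is why the hypothesis here is $\ga\ge 2$ rather than the $\ga\ge 1$ of Theorem~\ref{thm:fredh_walk}. I would first invoke Theorem~\ref{thm:fredh_walk} to get that $L_{WS}$ is Fredholm of index zero for $\ga\ge 1$ (in particular for $\ga\ge 2$), then cite Theorem~\ref{thm:eq} to obtain $\al,\be$ and the intertwining identity, and conclude by the standard stability of the Fredholm class under composition with isomorphisms on either side. It is worth stating explicitly that $\be$ being an isomorphism of Banach spaces sends the closed finite-codimensional subspace $\im(L_{WS})$ to a closed finite-codimensional subspace of the same codimension, so $\im(L_{TE})$ is closed and of finite codimension equal to $\dim\ker(L_{TE})$.

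For part (ii) the task is to rewrite the abstract image characterization $\im(L_{TE})=\be(\im(L_{WS}))$ in the explicit orthogonality form (\ref{eq:ortog2}). Starting from Theorem~\ref{thm:fredh_walk}(ii), a pair $(g,g)\in W_d^\ga$ lies in $\im(L_{WS})$ iff the integral condition (\ref{eq:ortog1}) holds against every $(\tilde v,\tilde w)\in\ker(\tilde L_{WS})$. Given $f\in H^{0,\ga-1}$, write $f=\be(g,g)=g_t+2\mu g$ via the bijection of Lemma~\ref{lem:fg}, so that $f\in\im(L_{TE})$ iff $(g,g)\in\im(L_{WS})$. I then need to translate the test functions and the pairing. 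Applying $\tilde\al:=\tilde\al_2\circ\tilde\al_1$ (the analogue of $\al$ built for the minus-sign operators, so that $\ker(\tilde L_{TE})=\tilde\al(\ker(\tilde L_{WS}))$ via the adjoint copy of the diagram) identifies the elements $\tilde u=(\tilde v+\tilde w)/2\in\ker(\tilde L_{TE})$ with the combination $\tilde v+\tilde w$ that appears in (\ref{eq:ortog1}). The remaining computation is to show that
\[
\int_0^{\frac{2\pi}{\om}}\!\!\int_0^1 g\,(\tilde v+\tilde w)\,dx\,dt=0
\quad\Longleftrightarrow\quad
\int_0^{\frac{2\pi}{\om}}\!\!\int_0^1 f\,\tilde u\,dx\,dt=0,
\]
which follows by substituting $f=g_t+2\mu g$ and $\tilde u=\tilde v+\tilde w$, integrating the $g_t$ term by parts in $t$ (boundary terms vanish by periodicity (\ref{eq:per})), and using that $\tilde u\in\ker(\tilde L_{TE})$ relates $\tilde u_t$ back to $g$ through the defining equations of $\tilde L_{WS}$; equivalently one observes the pairing is preserved because $\be$ and the adjoint correspondence on kernels are mutually transpose.

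The main obstacle I expect is part (ii): the index-zero transfer in part (i) is essentially formal, but matching the two orthogonality conditions requires correctly identifying $\ker(\tilde L_{TE})$ with the image of $\ker(\tilde L_{WS})$ under the tilde-isomorphisms and then verifying that the bilinear pairing $(f,u)\mapsto\int\!\int fu$ corresponds, under $\be$ and this kernel identification, to the pairing $((g,g),(\tilde v,\tilde w))\mapsto\int\!\int g(\tilde v+\tilde w)$. The delicate bookkeeping is the integration by parts in $t$ turning $g_t+2\mu g$ into a pairing with $\tilde u$, and confirming that the factor $2\mu$ and the sign conventions in $\tilde L_{WS}$ versus $\tilde L_{TE}$ line up so that no spurious terms survive. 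Once Lemma~\ref{lem:dimker} is available, it also confirms the dimension counts are consistent, but the substantive work is the explicit pairing identity rather than the abstract Fredholm stability.
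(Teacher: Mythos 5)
Your proposal is correct and is essentially the paper's own argument: the paper proves the theorem in one line by combining Theorem~\ref{thm:eq} (the isomorphisms $\al$, $\be$ and the intertwining relation $\be\circ L_{WS}=L_{TE}\circ\al$) with Theorem~\ref{thm:fredh_walk}, Lemma~\ref{lem:dimker}, and Remark~\ref{rem:ortog}, exactly the conjugation-by-isomorphisms transfer you describe. The only point where your sketch is vaguer than it needs to be is the pairing identity in part (ii): integration by parts gives $\int\!\!\int f\tilde u\,dx\,dt=\int\!\!\int g\,(2\mu\tilde u-\tilde u_t)\,dx\,dt$ (not directly $\int\!\!\int g\,(\tilde v+\tilde w)\,dx\,dt$, and $\tilde u_t$ is not ``related back to $g$''), so the clean way to close the loop is to observe that $\tilde u\mapsto 2\mu\tilde u-\tilde u_t$ maps the finite-dimensional space $\ker(\tilde L_{TE})$ bijectively onto itself (it commutes with $\tilde L_{TE}$ and is injective because the only periodic solution of $\tilde u_t=2\mu\tilde u$ with $\mu\ne 0$ is zero), which is the content the paper extracts from Remark~\ref{rem:ortog} and Lemma~\ref{lem:dimker}.
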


Theorem~\ref{thm:fredh_tel} follows directly from Theorems~\ref{thm:eq} and~\ref{thm:fredh_walk} and Lemma~\ref{lem:dimker}
(see also Remark~\ref{rem:ortog}).

\begin{rem}{\rm
Like the problem (\ref{eq:walk1})--(\ref{eq:Neu1}), one can observe a similar smoothing effect  for the problem
(\ref{eq:tel})--(\ref{eq:Neu}): The kernel of the problem does not depend $\ga$ and, given $\ga>2$,   all $U_b^2$-solutions 
to problem (\ref{eq:tel})--(\ref{eq:Neu}) with $f\in H^{0,\ga-1}$, necessarily belong to $U_b^\ga$.
}
\end{rem}

\begin{rem}\label{rem:perturb}{\rm
Since the set of Fredholm operators is open,  the  conclusion of Theorem \ref{thm:fredh_tel}  survives under 
sufficiently small  perturbations of the operator $L_{TE}\in\LL(U^\ga;H^{0,\ga-1})$.
The theorem remains true, if, instead of $L_{TE}$, we consider the operator of the problem (\ref{eq:tel_perturb}), (\ref{eq:per}), (\ref{eq:Neu}) with
$\nu$ and $\al$  such that 
$\nu(x,t)u_t\in H^{0,\ga}$, $\nu(x,t)$ is sufficiently small perturbation of a nonzero constant,
$\al(x,t)u\in H^{0,\ga}$, and $\al(x,t)$ is sufficiently small.
 The Fredholm property of the 
slightly perturbed problem is fulfilled independently of whether or not the perturbed problems 
(\ref{eq:tel})--(\ref{eq:Neu}) and (\ref{eq:walk1})--(\ref{eq:Neu1}) remain equivalent.
}
\end{rem}

\section*{Acknowledgments}
I thank Lutz Recke for bringing the paper~\cite{Hillen} to my attention.

\end{document}